\newenvironment{proof}[1][Proof:]{\begin{trivlist} 
\item[\hskip \labelsep {\bfseries #1}]}{\end{trivlist}} 
\newcommand{\qed}{\nobreak \ifvmode \relax \else \ifdim\lastskip<1.5em \hskip-\lastskip \hskip1.5em plus0em minus0.5em \fi \nobreak \vrule height0.75em width0.5em depth0.25em\fi} 
\def\0{\bf \0}
\def\A{{\bf A}}
\def\D{{\bf D}}
\def\I{{\bf I}}
\def\0{{\bf 0}}
\def\R{\mathbb{R}}
\def\S{{\bf S}}
\def\T{{\bf T}}
\def\X{{\bf X}}
\def\a{{\bf a}}
\def\b{{\bf b}}
\def\c{{\bf c}}
\def\e{{\bf e}}
\def\r{{\bf r}}
\def\s{{\bf s}}
\def\u{{\bf u}}
\def\v{{\bf v}}
\def\x{{\bf x}}
\def\y{{\bf y}}
\def\Tr{{\rm T}}
\def\T{{\rm T}}
\newtheorem{algorithm}{Algorithm}[section]
\newtheorem{theorem}{Theorem}[section]
\newtheorem{lemma}{Lemma}[section]
\newtheorem{remark}{Remark}[section]
\begin{document}
\title{An ${\cal O}(nL)$ Infeasible-Interior-Point Algorithm for Linear Programming
}
\author{Yuagang Yang\thanks{US NRC, Office of Research, 21 Church 
Street, Rockville, 20850. Email: yaguang.yang@verizon.net.} 
 and Makoto Yamashita\thanks{
Department of Mathematical and Computing Sciences, Tokyo Institute of Technology. Email: Makoto.Yamashita@is.titech.ac.jp.}}

\date{\today}

\maketitle    

\begin{abstract}
In this paper, we propose an arc-search infeasible-interior-point
algorithm. We show that this algorithm is polynomial and the 
polynomial bound is ${\cal O}(nL)$ which is at least as good as
the best existing bound for infeasible-interior-point algorithms
for linear programming.
\end{abstract}

{\bf Keywords:} polynomial algorithm, infeasible-interior-point method, 
linear programming.

\newpage
 
\section{Introduction}

Since Klee and Minty \cite{kleeMinty72} showed that a simplex method
for linear programming is not a polynomial algorithm, polynomial complexity bound has become a popular metric to measure the
efficiency of optimization algorithms.
Searching for polynomial algorithms for linear programming was a major
research area of optimization between 1980's and 1990's after
Khachiyan \cite{Khachiyan79} announced the first polynomial algorithm for 
linear programming. Although Khachiyan's algorithm was  shown to
be much less efficient in practice than the simplex method \cite{bland81}, Karmarkar's
interior-point method \cite{karmarkar84} demonstrated the possibility
of existence of efficient polynomial algorithms. For feasible starting point, 
people quickly established polynomial bounds for various interior-point 
algorithms \cite{Renegar88,kmy89a,kmy89b,Gonzaga90,ye91,mty93}. The lowest 
bound of these algorithms is ${\cal O}(\sqrt{n}L)$ which 
has not been improved for more than two decades. 

To have an efficient implementation for interior-point algorithms, 
Mehrotra \cite{Mehrotra92} and Lustig et. al. \cite{lms92} 
realized that higher-order method and infeasible starting point are two necessary
improvements. However, algorithms with either one of these features
had poorer complexity bounds than ${\cal O}(\sqrt{n}L)$.
Monteiro, Adler, and Resende \cite{mar90} showed that a 
higher-order algorithm starting from a feasible point has the polynomial bound ${\cal O}({n}L)$. 
For infeasible-interior-point method, Zhang \cite{zhang94},
Mizuno \cite{mizuno94}, and Miao \cite{miao96} established 
polynomiality for several different algorithms (none of them is a
higher-order algorithm). The best 
complexity bound ${\cal O}({n}L)$ for infeasible interior-point methods 
has not been changed since the eary of 1990's.

Recently, Yang \cite{yang13,yang11} showed that for a 
higher-order interior-point method 
starting from a feasible point, the polynomial bound can be improved to
${\cal O}(\sqrt{n}L)$ by using an arc-search method. 
Very recently, Yang et. al. \cite{yzl15} used the same idea and 
proposed a polynomial arc-search infeasible-interior-point 
algorihtm with a complexity bound of 
${\cal O}(n^{\frac{5}{4}}L)$.
In this paper, we show that for higher-order infeasible-interior-point method using arc-search, the polynomial bound can be 
improved to ${\cal O}(nL)$,
which is a bound at least as good as the best bound of existing infeasible-interior-point algorithms.

The remainder of the paper is organized as follows. Section 2 describes 
the problem. Section 3 provides an infeasible-predictor-corrector algorithm. 
Section 4 proves its polynomiality. Section 5 summarizes the
conclusions.

\section{Problem Descriptions}

The standard form of linear programming in this paper is given as follows:
\begin{eqnarray}
\min \hspace{0.05in} \c^{\T}\x, \hspace{0.15in} \mbox{\rm subject to} 
\hspace{0.1in}  \A\x=\b, \hspace{0.1in} \x \ge 0,
\label{LP}
\end{eqnarray}
where $\A \in {\R}^{m \times n}$, $\b \in {\R}^{m} $, $\c \in {\R}^{n}$ 
are given, and $\x \in {\R}^n$  is the vector to be optimized. Associated 
with the linear programming is the dual programming that is also presented in the standard form:
\begin{eqnarray}
\max \hspace{0.05in} \b^{\T}\y, \hspace{0.15in} \mbox{\rm subject to} 
\hspace{0.1in}  \A^{\T}\y+\s=\c, \hspace{0.1in} \s \ge 0,
\label{DP}
\end{eqnarray}
where dual variable vector $\y \in {\R}^{m}$, and dual slack vector 
$\s \in {\R}^{n}$. We use ${\cal S}$ to denote the set of all the optimal solutions $(\x^*,\y^*,\s^*)$ 
of (\ref{LP}) and
(\ref{DP}). It is well known that $\x \in {\R}^n$ is an optimal solution 
of (\ref{LP}) if and only if $\x$, $\y$, and $\s$ satisfy the 
following KKT conditions
\begin{subequations}
\begin{align}
\A\x=\b,  \\
\A^{\T}\y+\s=\c, \\
(\x,\s) \ge 0, \\
x_is_i = 0, \hspace{0.1in} i=1,\ldots,n.
\end{align}
\label{ifonlyif}
\end{subequations}
To simplify the notation, we will denote Hadamard (element-wise) product of two 
vectors $\x$ and $\s$ by $\x \circ \s$, 
the element-wise division of the two vectors by $\s^{-1}\circ \x$, or $\x\circ \s^{-1}$,
or $\frac{{{\x}}}{{\s}}$ if $\min | s_i | >0$, the Euclidean norm of $x$ by $\| \x \|$, 
the infinite norm of $\x$ by $\| \x \|_{\infty}$, 
the identity matrix of any dimension by $\I$, the vector of all ones with appropriate 
dimension by $\e$, 
the block column vectors, for example, $[\x^{\T}, \s^{\T}]^{\T}$ 
by $(\x, \s)$. For $\x \in \R^n$, we will denote a related diagonal matrix 
by $\X \in \R^{n \times n}$ whose diagonal elements are the components
of the vector $\x$. Finally, we define an initial vector point of a sequence 
by $\x^0$, an initial scalar point of a sequence 
by $\mu_0$, the vector point after the $k$th iteration by $\x^k$, the scalar point 
after the $k$th iteration by $\mu_k$. Let 
\begin{subequations}
\begin{align}
\r_b^k=\A\x^k -\b, \\  \r_c^k=\A^{\Tr} \y^k + \s^k-\c.
\end{align}
\label{resi}
\end{subequations}
Given a strictly positive current point $(\x^k, \s^k)>0$, the infeasible-predictor-corrector 
algorithm is to find the solution of (\ref{LP}) approximately along a curve ${\cal C}(t)$
defined by the following system
\begin{subequations}
\begin{align}
\A\x(t)-\b=t \r_b^k  \\
\A^{\T}\y(t) +\s(t)-c=t \r_c^k \\
\x(t) \circ \s(t) = t \x^k \circ \s^k \\
(\x(t),\s(t)) >0, 
\end{align}
\label{curve}
\end{subequations}
where $t \in (0,1]$. As $t \rightarrow 0$, $(\x(t), \y(t), \s(t))$ 
appraches the solution of (\ref{LP}). Since ${\cal C}(t)$ is not easy to obtain,
we will use an ellipse ${\cal E}$ \cite{carmo76} in the $2n+m$ dimensional 
space to approximate the curve defined by (\ref{curve}), where 
${\cal E}$ is given by
\begin{equation}
{\cal E}=\lbrace (\x(\alpha), \y(\alpha), \s(\alpha)): 
(\x(\alpha), \y(\alpha), \s(\alpha))=
\vec{\a}\cos(\alpha)+\vec{\b}\sin(\alpha)+\vec{\c} \rbrace,
\label{ellipse}
\end{equation}
$\vec{\a} \in \R^{2n+m}$ and $\vec{\b} \in \R^{2n+m}$ are the axes of the 
ellipse, and $\vec{\c} \in \R^{2n+m}$ is the center of the ellipse. Taking the derivatives of
(\ref{curve}) gives
\begin{equation}
\left[
\begin{array}{ccc}
\A & \0 & \0\\
\0 & \A^{\T} & \I \\
\S^k & \0 & \X^k
\end{array}
\right]
\left[
\begin{array}{c}
\dot{{\x}} \\ \dot{\y}  \\  \dot{{\s}}
\end{array}
\right]
=\left[
\begin{array}{c}
\r_b^k \\ \r_c^k \\ {\x^k} \circ {\s^k} 
\end{array}
\right],
\label{doty}
\end{equation}

\begin{equation}
\left[
\begin{array}{ccc}
\A & \0 & \0\\
\0 & \A^{\T} & \I \\
\S^k & \0 & \X^k
\end{array}
\right]
\left[
\begin{array}{c}
\ddot{\x} \\ \ddot{\y}  \\  \ddot{\s}
\end{array}
\right]
=\left[
\begin{array}{c}
\0 \\ \0 \\ -2\dot{\x} \circ \dot{\s}
\end{array}
\right].
\label{ddoty}
\end{equation}
We require the ellipse to pass the same point $({\x^k}, \y^k, {\s^k})$ on ${\cal C}(t)$
and to have the same derivatives given by (\ref{doty}) and (\ref{ddoty}).
The ellipse is given in \cite{yang13,yang11} as
\begin{theorem}
Let $(\x(\alpha),\y(\alpha),\s(\alpha))$ be an arc defined by 
(\ref{ellipse}) passing
through a point $(\x,\y,\s)\in {\cal E} \cap {\cal C}(t)$, and its first and second 
derivatives at $(\x,\y,\s)$ be $(\dot{\x}, \dot{\y}, \dot{\s})$ and 
$(\ddot{\x}, \ddot{\y}, \ddot{\s})$ which are defined by
(\ref{doty}) and (\ref{ddoty}). Then, the ellipse approximation of $(\ref{curve})$ is given by
\begin{equation}
\x(\alpha) = \x - \dot{\x}\sin(\alpha)+\ddot{\x}(1-\cos(\alpha)).
\end{equation}
\begin{equation}
\y(\alpha) = \y-\dot{\y}\sin(\alpha)
+\ddot{\y} (1-\cos(\alpha)).
\end{equation}
\begin{equation}
\s(\alpha) = \s - \dot{\s}\sin(\alpha)+\ddot{\s} (1-\cos(\alpha)).
\end{equation}
\label{ellipseSX}
\end{theorem}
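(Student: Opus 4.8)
The plan is to determine the three vectors $\vec{\a},\vec{\b},\vec{\c}\in\R^{2n+m}$ in (\ref{ellipse}) from three interpolation conditions imposed at $\alpha=0$ and then to read off the $\x$-, $\y$-, and $\s$-blocks. Write the ellipse block-wise as $\x(\alpha)=\vec{\a}_x\cos\alpha+\vec{\b}_x\sin\alpha+\vec{\c}_x$, $\y(\alpha)=\vec{\a}_y\cos\alpha+\vec{\b}_y\sin\alpha+\vec{\c}_y$, and $\s(\alpha)=\vec{\a}_s\cos\alpha+\vec{\b}_s\sin\alpha+\vec{\c}_s$, where a subscript denotes the corresponding block of $\vec{\a},\vec{\b},\vec{\c}$. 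Because the three blocks decouple in (\ref{ellipse}), it is enough to carry out the argument for the $\x$-block; the $\y$- and $\s$-blocks are identical in form.

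First I would fix the correspondence between the ellipse parameter $\alpha$ and the curve parameter $t$. Putting $t=1$ in (\ref{curve}) identifies the common point as $(\x,\y,\s)=(\x(1),\y(1),\s(1))$, and differentiating (\ref{curve}) once and twice with respect to $t$ at $t=1$ reproduces exactly the linear systems (\ref{doty}) and (\ref{ddoty}); thus $(\dot\x,\dot\y,\dot\s)$ and $(\ddot\x,\ddot\y,\ddot\s)$ are the first and second $t$-derivatives of ${\cal C}$ at $t=1$. The arc should leave $(\x,\y,\s)$ along ${\cal C}$ in the direction of decreasing $t$, since that is the direction toward a solution of (\ref{LP}); matching to first order then suggests the local correspondence $t=1-\sin\alpha$, under which the chain rule gives the interpolation data
\begin{equation}
\x(0)=\x,\quad \left.\frac{d\x}{d\alpha}\right|_{\alpha=0}=-\dot\x,\quad \left.\frac{d^2\x}{d\alpha^2}\right|_{\alpha=0}=\ddot\x,
\end{equation}
and the same for $\y$ and $\s$ (the minus sign on the first derivative comes from $dt/d\alpha|_{0}=-1$; there is no such sign on the second because $d^2t/d\alpha^2|_{0}=0$).

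Next I would impose these three conditions. From $\x(\alpha)=\vec{\a}_x\cos\alpha+\vec{\b}_x\sin\alpha+\vec{\c}_x$ one has $d\x/d\alpha=-\vec{\a}_x\sin\alpha+\vec{\b}_x\cos\alpha$ and $d^2\x/d\alpha^2=-\vec{\a}_x\cos\alpha-\vec{\b}_x\sin\alpha$; evaluating at $\alpha=0$ gives $\vec{\a}_x+\vec{\c}_x=\x$, $\vec{\b}_x=-\dot\x$, and $-\vec{\a}_x=\ddot\x$, hence $\vec{\a}_x=-\ddot\x$, $\vec{\b}_x=-\dot\x$, $\vec{\c}_x=\x+\ddot\x$. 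Substituting back,
\begin{equation}
\x(\alpha)=-\ddot\x\cos\alpha-\dot\x\sin\alpha+\x+\ddot\x=\x-\dot\x\sin\alpha+\ddot\x(1-\cos\alpha),
\end{equation}
and the expressions for $\y(\alpha)$ and $\s(\alpha)$ follow from the identical computation. There is essentially no hard analytic step here; the whole content is the trivially solvable linear inversion of the interpolation system, so the only point requiring care is the bookkeeping of the orientation — getting the sign in $d\x/d\alpha|_{\alpha=0}=-\dot\x$ right by orienting $\alpha$ so that the arc runs from the current iterate toward optimality — together with the (standard) observation that $(\dot\x,\dot\y,\dot\s)$ and $(\ddot\x,\ddot\y,\ddot\s)$ are well defined, since the coefficient matrix in (\ref{doty}) is nonsingular when $(\x^k,\s^k)>0$ and $\A$ has full row rank. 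Alternatively, one can skip the derivation and just verify directly that the three displayed formulas reproduce $(\x,\y,\s)$ at $\alpha=0$ and have $\alpha$-derivatives $-(\dot\x,\dot\y,\dot\s)$ and $(\ddot\x,\ddot\y,\ddot\s)$ there; since a curve of the form (\ref{ellipse}) is uniquely determined by such data, this settles it.
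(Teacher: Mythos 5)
Your proof is correct. The paper itself gives no proof of this theorem --- it simply cites \cite{yang13,yang11} --- so there is nothing to compare line by line; your interpolation argument (solve for $\vec{\a},\vec{\b},\vec{\c}$ from the value and the first two $\alpha$-derivatives at $\alpha=0$, then read off the blocks) is exactly the standard derivation in those references. The one place where care is genuinely needed is the sign convention: since $(\dot{\x},\dot{\y},\dot{\s})$ and $(\ddot{\x},\ddot{\y},\ddot{\s})$ in (\ref{doty})--(\ref{ddoty}) are $t$-derivatives of ${\cal C}(t)$ at $t=1$ while the arc is parametrized by $\alpha$ increasing toward the optimum (i.e., $t$ decreasing), the matching conditions must be $d\x/d\alpha|_{0}=-\dot{\x}$ and $d^2\x/d\alpha^2|_{0}=+\ddot{\x}$; your local correspondence $t=1-\sin\alpha$ makes this explicit and gets both signs right, which is precisely what produces the $-\dot{\x}\sin\alpha$ and $+\ddot{\x}(1-\cos\alpha)$ terms in the stated formulas. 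The closing remark that the three displayed formulas can instead be verified directly against the interpolation data, together with uniqueness of a curve of the form (\ref{ellipse}) with prescribed value and first two derivatives at a point, is a legitimate shortcut and settles the claim.
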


\section{Infeasible predictor-corrector algorithm}

We denote the duality measure by
\begin{equation}
\mu = \frac{ \x^{\Tr} \s}{n},
\end{equation}
and define the set of neighborhood by
\begin{equation}
{\cal N}(\theta) := \{ (\x,\s) \,\,\, | \,\,\, (\x,\s)>0, \hspace{0.1in} \| \x \circ \s - \mu \e \| \le \theta \mu \}.
\end{equation}
The proposed algorithm searches an optimizer along the ellipse while staying inside ${\cal N}(\theta)$.

\begin{algorithm} {\ } \\
Data: $\A$, $\b$, $\c$, $\theta \in (0, \frac{1}{2+\sqrt{2}}]$, $\epsilon>0$, initial point 
$({\x}^0, {\y}^0, {\s}^0) \in {\cal N}(\theta)$.
\newline
{\bf for} iteration $k=1,2,\ldots$
\begin{itemize}
\item[] Step 1: If 
\begin{subequations}
\begin{align} \label{11a}
{\mu}_k \le {\epsilon}, \\
\label{11b}
\| \r_b^k \| = \|\A{\x}^k-\b \| \le \epsilon, \\
\label{11c}
\| \r_c^k \| = \|\A^{\T} {\y}^k+{\s}^k-\c \| \le \epsilon, \\
\label{11d}
 ({\x}^k, {\s}^k)>0.
\end{align}
\label{errorNorms}
\end{subequations}
stop. Otherwise continue.
\item[] Step 2: Solve the linear systems of equations (\ref{doty}) 
and (\ref{ddoty}) to get $(\dot{\x}, \dot{\y}, \dot{\s})$ and
$(\ddot{\x}, \ddot{\y}, \ddot{\s})$. 
\item[] Step 3: Find the smallest positive $\bar{\alpha} \in (0, \pi/2]$
such that for all $\alpha \in (0, \bar{\alpha}]$, $(\x(\alpha), \s(\alpha))>\0$ and 
\begin{equation}
\| (\x(\alpha) \circ \s(\alpha)) -(1-\sin(\alpha)) \mu_k \e \| \le 2 \theta (1-\sin(\alpha)) \mu_k.
\label{accurate}
\end{equation}
Set (to simplify the notation, we use $\alpha$ in stead of $\bar{\alpha}$ in the rest of the paper)
\begin{equation}
(\x(\alpha), \y(\alpha), \s(\alpha))=({\x}^k,{\y}^k,{\s}^k)
-(\dot{\x}, \dot{\y}, \dot{\s})\sin(\alpha)
+(\ddot{\x}, \ddot{\y}, \ddot{\s})(1-\cos(\alpha)).
\label{poly2}
\end{equation}
\item[] Step 4: Calculate $(\Delta \x,\Delta \y,\Delta \s)$ by solving
\begin{equation}
\left[
\begin{array}{ccc}
\A & \0 & \0\\
\0 & \A^{\T} & \I \\
\S(\alpha) & \0 & \X(\alpha)
\end{array}
\right]
\left[
\begin{array}{c}
\Delta \x \\ \Delta \y  \\  \Delta \s
\end{array}
\right]
=\left[
\begin{array}{c}
\0 \\ \0 \\ (1-\sin(\alpha)){\mu}_k \e -\x(\alpha) \circ \s(\alpha)
\end{array}
\right].
\label{newtondir1}
\end{equation}
Update
\begin{equation}
({\x}^{k+1}, {\y}^{k+1}, {\s}^{k+1})=(\x(\alpha), \y(\alpha), \s(\alpha))
+(\Delta \x,\Delta \y,\Delta \s)
\label{poly3}
\end{equation}
and 
\begin{equation}
{\mu}_{k+1}=\frac{{\x}^{{k+1}^{\T}}{\s}^{k+1}}{n}.
\label{poly4}
\end{equation}
\item[] Step 5: Set $k+1 \rightarrow k$. Go back to Step 1.
\end{itemize}
{\bf end (for)} 
\hfill \qed
\label{mainAlgo3}
\end{algorithm}

In the rest of this section, we will show (1) $\r_b^k \rightarrow 0$, $\r_c^{k} \rightarrow 0$, 
and $\mu_k \rightarrow 0$; (2) there exist $\alpha \in (0, \pi/2]$ such that 
$(\x(\alpha), \s(\alpha))>0$ and (\ref{accurate}) holds; (3) $(\x^k, \s^k) \in {\cal N}(\theta)$. 
It is easy to show that $\r_b^k$, $\r_c^k$, and $\mu_k$ decrease at the same rate
in every iteration.
\begin{lemma}
\begin{eqnarray}
\r_b^{k+1}=\r_b^k (1-\sin(\alpha)), \hspace{0.1in} \r_c^{k+1}=\r_c^{k} (1-\sin(\alpha)), 
\hspace{0.1in} \mu_{k+1}=\mu_k (1-\sin(\alpha)).
\end{eqnarray}
\label{rate}
\end{lemma}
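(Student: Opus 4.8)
The plan is to track how the residuals and the duality measure change through the two substeps of each iteration: the arc-search step that produces $(\x(\alpha),\y(\alpha),\s(\alpha))$, and the Newton correction step that produces $(\x^{k+1},\y^{k+1},\s^{k+1})$. First I would compute $\r_b(\alpha) := \A\x(\alpha)-\b$. Using the expression (\ref{poly2}) for $\x(\alpha)$ and the first block row of (\ref{doty}) (which gives $\A\dot\x = \r_b^k$) together with the first block row of (\ref{ddoty}) (which gives $\A\ddot\x = \0$), I get
\begin{equation}
\A\x(\alpha)-\b = (\A\x^k-\b) - (\A\dot\x)\sin(\alpha) + (\A\ddot\x)(1-\cos(\alpha)) = \r_b^k - \r_b^k\sin(\alpha) = \r_b^k(1-\sin(\alpha)).
\end{equation}
The identical manipulation with the second block rows of (\ref{doty}) and (\ref{ddoty}), using $\A^\T\dot\y+\dot\s = \r_c^k$ and $\A^\T\ddot\y+\ddot\s = \0$, yields $\A^\T\y(\alpha)+\s(\alpha)-\c = \r_c^k(1-\sin(\alpha))$.

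Next I would show the Newton correction step (\ref{newtondir1})--(\ref{poly3}) leaves these residuals unchanged. The first two block rows of (\ref{newtondir1}) say $\A\Delta\x = \0$ and $\A^\T\Delta\y+\Delta\s = \0$, so
\begin{equation}
\r_b^{k+1} = \A\x^{k+1}-\b = \A\x(\alpha)-\b + \A\Delta\x = \r_b^k(1-\sin(\alpha)),
\end{equation}
and similarly $\r_c^{k+1} = \r_c^k(1-\sin(\alpha))$. For the duality measure, I would take the inner product of the third block row of (\ref{newtondir1}) with $\e$ after noting $\S(\alpha)\Delta\x + \X(\alpha)\Delta\s = \x(\alpha)\circ\Delta\s + \s(\alpha)\circ\Delta\x$, and combine it with the cross-term identity $\x^{k+1}\circ\s^{k+1} = \x(\alpha)\circ\s(\alpha) + \bigl(\x(\alpha)\circ\Delta\s + \s(\alpha)\circ\Delta\x\bigr) + \Delta\x\circ\Delta\s$. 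Summing components and dividing by $n$ gives $n\mu_{k+1} = \e^\T(\x(\alpha)\circ\s(\alpha)) + \e^\T\bigl((1-\sin(\alpha))\mu_k\e - \x(\alpha)\circ\s(\alpha)\bigr) + \Delta\x^\T\Delta\s = n(1-\sin(\alpha))\mu_k + \Delta\x^\T\Delta\s$.

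The one genuine wrinkle is the term $\Delta\x^\T\Delta\s$: I need it to vanish. Since $\A\Delta\x = \0$ places $\Delta\x$ in the null space of $\A$, and $\Delta\s = -\A^\T\Delta\y$ places $\Delta\s$ in the row space of $\A$, these two subspaces are orthogonal, hence $\Delta\x^\T\Delta\s = -\Delta\x^\T\A^\T\Delta\y = -(\A\Delta\x)^\T\Delta\y = 0$. This is the key step — everything else is bookkeeping with the block rows of (\ref{doty}), (\ref{ddoty}), and (\ref{newtondir1}). With $\Delta\x^\T\Delta\s = 0$ established, $\mu_{k+1} = (1-\sin(\alpha))\mu_k$ follows immediately, completing the proof of all three claimed identities. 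I would remark that the step-3 requirement (\ref{accurate}) and positivity are not needed for this lemma; they only matter later for well-definedness of the algorithm and for staying in the neighborhood.
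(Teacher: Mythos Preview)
Your proof is correct and follows essentially the same approach as the paper: both use the block rows of (\ref{doty}), (\ref{ddoty}), and (\ref{newtondir1}) to track the residuals, and both rely on the orthogonality $\Delta\x^{\T}\Delta\s = -(\A\Delta\x)^{\T}\Delta\y = 0$ as the key step for the duality-measure identity. The only difference is presentational---you first compute the intermediate residuals after the arc step and then show the correction leaves them unchanged, whereas the paper computes $\r_b^{k+1}-\r_b^k$ directly---but the underlying argument is the same.
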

\begin{proof} Using (\ref{resi}), (\ref{poly3}), (\ref{poly2}), (\ref{ddoty}), and (\ref{doty}), we have
\begin{eqnarray}
& & \r_b^{k+1}-\r_b^k = \A (\x^{k+1}-\x^k) = \A(\x(\alpha) + \Delta \x-\x^k) \nonumber \\
& = & \A(\x^k-\dot{\x} \sin(\alpha)-\x^k) = -\A \dot{\x} \sin(\alpha) =-\r_b^k \sin(\alpha). \nonumber
\end{eqnarray}
This shows the first relation. The second relation follows a similar derivation.
From (\ref{newtondir1}), it holds that 
$(\Delta \x)^{\Tr} \Delta \s = (\Delta \x)^{\Tr} (-\A^{\Tr}\Delta \y) = 
-(\A \Delta \x)^{\Tr} \Delta \y = 0$.
Using (\ref{poly3}), we have
\begin{eqnarray}
& & \x^{{k+1}^{\Tr}} \s^{k+1}=(\x(\alpha)+\Delta \x)^{\Tr} (\s(\alpha)+\Delta \s)
=\x(\alpha)^{\Tr}  \s(\alpha)+\x(\alpha)^{\Tr}\Delta \s + \s(\alpha)^{\Tr}\Delta \x  \nonumber \\
& = & \x(\alpha)^{\Tr}  \s(\alpha) +(1-\sin(\alpha)) \mu_k n - \x(\alpha)^{\Tr}  \s(\alpha)
=(1-\sin(\alpha)) \mu_k n. \nonumber 
\end{eqnarray}
Dividing both sides by $n$ proves the last relation. 
\hfill \qed
\end{proof}

Clearly, if $\sin(\alpha)=1$ ($\alpha = \frac{\pi}{2}$), we will find the optimal solution 
(allowing some $x_i=0$ and/or $s_j=0$) in one step, which is rarely the case. Therefore, 
from now on, we assume $\alpha \in (0, \frac{\pi}{2})$. We will use the following lemma of \cite{miao96}.

\begin{lemma} Let $(\Delta \x, \Delta \s)$ be given by (\ref{newtondir1}). Then
\begin{equation}
\| \Delta \x \circ \Delta \s \| \le \frac{\sqrt{2}}{4} 
\|  (\X(\alpha)\S(\alpha))^{-\frac{1}{2}}(\x(\alpha) \circ \s(\alpha)-\mu_{k+1} \e) \|^2.
\end{equation}
\label{mizuno}
\end{lemma}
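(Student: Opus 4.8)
The plan is to reduce the statement to the classical estimate relating $\|\Delta\x\circ\Delta\s\|$ to the square of a scaled residual, by a diagonal rescaling of the corrector system (\ref{newtondir1}). First, by Lemma \ref{rate} the right-hand side of the third block of (\ref{newtondir1}) equals $(1-\sin(\alpha))\mu_k\e-\x(\alpha)\circ\s(\alpha)=\mu_{k+1}\e-\x(\alpha)\circ\s(\alpha)$, so the vector whose norm appears on the right of the lemma is, up to sign, exactly the scaled residual that drives the corrector step.

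Next I would introduce the diagonal matrix $D=\X(\alpha)^{\frac{1}{2}}\S(\alpha)^{-\frac{1}{2}}$, which is well defined since $(\x(\alpha),\s(\alpha))>0$ on the relevant range of $\alpha$, and set $\u=D^{-1}\Delta\x$ and $\v=D\Delta\s$. Multiplying the third block row $\S(\alpha)\Delta\x+\X(\alpha)\Delta\s=\mu_{k+1}\e-\x(\alpha)\circ\s(\alpha)$ on the left by $(\X(\alpha)\S(\alpha))^{-\frac{1}{2}}$ gives
\begin{equation}
\u+\v=(\X(\alpha)\S(\alpha))^{-\frac{1}{2}}\big(\mu_{k+1}\e-\x(\alpha)\circ\s(\alpha)\big),
\end{equation}
and, since $D$ is diagonal, $\u\circ\v=\Delta\x\circ\Delta\s$ componentwise. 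The first two block rows give $\A\Delta\x=\0$ and $\Delta\s=-\A^{\Tr}\Delta\y$, whence $\u^{\Tr}\v=\Delta\x^{\Tr}\Delta\s=-(\A\Delta\x)^{\Tr}\Delta\y=0$, exactly as in the proof of Lemma \ref{rate}. Because $\|\u\circ\v\|=\|\Delta\x\circ\Delta\s\|$ and the right-hand side of the displayed identity has the same norm as $(\X(\alpha)\S(\alpha))^{-\frac{1}{2}}(\x(\alpha)\circ\s(\alpha)-\mu_{k+1}\e)$, the lemma is equivalent to the inequality $\|\u\circ\v\|\le\frac{\sqrt{2}}{4}\|\u+\v\|^2$ for an arbitrary pair of orthogonal vectors $\u,\v$.

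To prove that inequality I would partition the indices into $P=\{i:u_iv_i\ge0\}$ and $M=\{i:u_iv_i<0\}$. On each block the components of $\u\circ\v$ share a sign, so applying $\|\cdot\|_2\le\|\cdot\|_1$ on each block yields $\|\u\circ\v\|^2\le\big(\sum_{i\in P}u_iv_i\big)^2+\big(\sum_{i\in M}u_iv_i\big)^2$. Writing $\beta=\sum_{i\in P}u_iv_i\ge0$, orthogonality $\u^{\Tr}\v=0$ forces $\sum_{i\in M}u_iv_i=-\beta$, so $\|\u\circ\v\|^2\le2\beta^2$. Finally $u_iv_i\le\frac{1}{4}(u_i+v_i)^2$ for $i\in P$ gives $\beta\le\frac{1}{4}\|\u+\v\|^2$, hence $\|\u\circ\v\|\le\sqrt{2}\,\beta\le\frac{\sqrt{2}}{4}\|\u+\v\|^2$, which is the claim.

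The rescaling and the residual identity are routine bookkeeping; the one delicate point is pinning down the constant $\frac{\sqrt{2}}{4}=2^{-3/2}$ in the last paragraph. The tempting but lossy move is to bound $\sum_{i\in M}(u_iv_i)^2$ the same way as the $P$-block via $(u_i-v_i)^2$, which then fails to combine cleanly with $\|\u+\v\|^2$; the efficient route is to use orthogonality to replace the $M$-contribution by the $P$-contribution \emph{before} invoking $u_iv_i\le\frac{1}{4}(u_i+v_i)^2$. Since this estimate is quoted from \cite{miao96}, one may alternatively just cite it, but the argument above is short and self-contained.
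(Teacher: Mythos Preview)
Your proof is correct. The paper does not actually prove this lemma: it simply attributes it to \cite{miao96}, and later records the underlying orthogonality estimate $\|\u\circ\v\|\le 2^{-3/2}\|\u+\v\|^2$ as a fact cited from \cite[Lemma 5.3]{wright97}. Your argument is precisely the standard one behind those citations: the diagonal rescaling of the corrector system to $\u+\v=(\X(\alpha)\S(\alpha))^{-1/2}(\mu_{k+1}\e-\x(\alpha)\circ\s(\alpha))$ with $\u^{\Tr}\v=0$, followed by the $P$/$M$ sign-partition proof of the $2^{-3/2}$ inequality. So you have supplied, in self-contained form, exactly what the paper outsources to the literature; there is no methodological difference to discuss.
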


\begin{theorem} Let $(\x(\alpha), \y(\alpha), \s(\alpha))$ and
$(\x^{k},\s^{k}) \in {\cal N}(\theta)$. Then, for all $k \ge 0$
\begin{itemize}
\item [(i)] there is an $\alpha > 0$, such that $(\x(\alpha), \s(\alpha))>0$ and (\ref{accurate})
holds.
\item [(ii)] if $\theta \le \frac{1}{2+\sqrt{2}}$, then 
$(\x^{k+1},\s^{k+1}) \in {\cal N}(\theta)$ for all the iterations.
\end{itemize}
\label{neighbor}
\end{theorem}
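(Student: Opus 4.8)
The engine for both parts is a single exact expansion of $\x(\alpha)\circ\s(\alpha)$ along the arc (\ref{poly2}), so the first thing I would do is derive it. Multiplying the $i$-th components of $\x(\alpha)$ and $\s(\alpha)$, substituting the third block rows of (\ref{doty}) and (\ref{ddoty}) (namely $s_i^k\dot{x}_i+x_i^k\dot{s}_i=x_i^k s_i^k$ and $s_i^k\ddot{x}_i+x_i^k\ddot{s}_i=-2\dot{x}_i\dot{s}_i$), and using $\sin^2(\alpha)-2(1-\cos(\alpha))=-(1-\cos(\alpha))^2$ to merge the $\dot{x}_i\dot{s}_i$ terms, one obtains
\[
\begin{aligned}
\x(\alpha)\circ\s(\alpha)-(1-\sin(\alpha))\mu_k\e
&=(1-\sin(\alpha))\big(\x^k\circ\s^k-\mu_k\e\big)\\
&\quad+(1-\cos(\alpha))^2\big(\ddot{\x}\circ\ddot{\s}-\dot{\x}\circ\dot{\s}\big)-\sin(\alpha)(1-\cos(\alpha))\big(\dot{\x}\circ\ddot{\s}+\ddot{\x}\circ\dot{\s}\big).
\end{aligned}
\]
Here $\dot{\x},\dot{\s},\ddot{\x},\ddot{\s}$ are determined by (\ref{doty}) and (\ref{ddoty}) and do not depend on $\alpha$. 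Everything below comes out of this identity together with Lemmas \ref{rate} and \ref{mizuno}.

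For (i) I would argue by continuity in $\alpha$. The first term on the right has norm at most $\theta(1-\sin(\alpha))\mu_k$ since $(\x^k,\s^k)\in{\cal N}(\theta)$, and the remaining two terms are bounded by fixed constants times $(1-\cos(\alpha))^2$ and $\sin(\alpha)(1-\cos(\alpha))$ respectively, both of which tend to $0$ as $\alpha\to0^{+}$ while $1-\sin(\alpha)\to1$. Hence there is $\bar{\alpha}>0$ such that for $\alpha\in(0,\bar{\alpha}]$ those two terms together have norm at most $\theta(1-\sin(\alpha))\mu_k$, which gives (\ref{accurate}); shrinking $\bar{\alpha}$ if needed, $\x(\alpha)\to\x^k>0$ and $\s(\alpha)\to\s^k>0$ give $(\x(\alpha),\s(\alpha))>0$.

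For (ii) I would first simplify $\x^{k+1}\circ\s^{k+1}$. From (\ref{poly3}) the cross term $\x(\alpha)\circ\Delta\s+\s(\alpha)\circ\Delta\x$ equals, by the third block row of (\ref{newtondir1}), $(1-\sin(\alpha))\mu_k\e-\x(\alpha)\circ\s(\alpha)$, so after cancellation and Lemma \ref{rate} one gets $\x^{k+1}\circ\s^{k+1}-\mu_{k+1}\e=\Delta\x\circ\Delta\s$. Next I would feed this into Lemma \ref{mizuno}: by part (i), $\|\x(\alpha)\circ\s(\alpha)-\mu_{k+1}\e\|\le2\theta\mu_{k+1}$ (using $\mu_{k+1}=(1-\sin(\alpha))\mu_k$), hence $x_i(\alpha)s_i(\alpha)\ge(1-2\theta)\mu_{k+1}>0$ for all $i$, and so
\[
\|(\X(\alpha)\S(\alpha))^{-\frac{1}{2}}(\x(\alpha)\circ\s(\alpha)-\mu_{k+1}\e)\|^2\le\frac{\|\x(\alpha)\circ\s(\alpha)-\mu_{k+1}\e\|^2}{(1-2\theta)\mu_{k+1}}\le\frac{4\theta^2\mu_{k+1}}{1-2\theta},
\]
which with Lemma \ref{mizuno} gives $\|\Delta\x\circ\Delta\s\|\le\frac{\sqrt{2}\,\theta^2}{1-2\theta}\mu_{k+1}$. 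The hypothesis $\theta\le\frac{1}{2+\sqrt{2}}$ is precisely the inequality $\frac{\sqrt{2}\,\theta^2}{1-2\theta}\le\theta$, so $\|\x^{k+1}\circ\s^{k+1}-\mu_{k+1}\e\|\le\theta\mu_{k+1}$. For $(\x^{k+1},\s^{k+1})>0$ I would run the homotopy $(\x(\alpha)+\tau\Delta\x,\s(\alpha)+\tau\Delta\s)$, $\tau\in[0,1]$: its $i$-th product equals $(1-\tau)x_i(\alpha)s_i(\alpha)+\tau\mu_{k+1}+\tau^2(\Delta\x\circ\Delta\s)_i\ge\tau\mu_{k+1}(1-\tau\theta)>0$ for $\tau\in(0,1]$ and is positive at $\tau=0$, so it never vanishes; since the two factors are continuous in $\tau$ and positive at $\tau=0$ (by (i)), they remain positive up to $\tau=1$. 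Thus $(\x^{k+1},\s^{k+1})\in{\cal N}(\theta)$, and since $(\x^0,\s^0)\in{\cal N}(\theta)$ the claim holds at every iteration by induction. I expect the only genuinely delicate step to be the opening expansion — getting the trigonometric cancellation right so that every non-principal term carries a factor $(1-\cos(\alpha))$, which is what lets (i) succeed for small $\alpha$ — while in (ii) the point to watch is keeping each estimate tight enough that the sufficient bound on $\theta$ turns out to be exactly $\frac{1}{2+\sqrt{2}}$.
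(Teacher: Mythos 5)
Your proposal is correct and follows essentially the same route as the paper: the same exact expansion of $\x(\alpha)\circ\s(\alpha)-(1-\sin(\alpha))\mu_k\e$ into a $(1-\sin(\alpha))$ principal term plus $(1-\cos(\alpha))^2$ and $\sin(\alpha)(1-\cos(\alpha))$ remainders, a continuity argument at $\alpha=0$ for (i), and the identity $\x^{k+1}\circ\s^{k+1}-\mu_{k+1}\e=\Delta\x\circ\Delta\s$ combined with Lemma \ref{mizuno} and a homotopy along the Newton step for (ii), including the same threshold computation $\frac{\sqrt{2}\theta^2}{1-2\theta}\le\theta$. The only cosmetic differences are in the positivity arguments: you bound the homotopy products componentwise by $\tau\mu_{k+1}(1-\tau\theta)$ where the paper bounds the norm by $(2(1-t)+t^2)\theta\mu_{k+1}$, and your explicit continuity step showing $(\x(\alpha),\s(\alpha))>0$ from $x_i(\alpha)s_i(\alpha)>0$ makes precise a point the paper leaves implicit.
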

\begin{proof} Using $1-\cos(\alpha) \le 1-\cos^2(\alpha) =\sin^2 (\alpha)$ and 
$({\x}^k, {\y}^k, {\s}^k) \in {\cal N}(\theta)$, we have
\begin{eqnarray}
& & \| \x(\alpha) \circ \s(\alpha) - (1-\sin(\alpha)) \mu_k \e \|  \nonumber \\
& = & \| (\x^k \circ \s^k - \mu_k \e)(1-\sin(\alpha)) +(\ddot{\x} \circ \ddot{\s}-\dot{\x} \circ \dot{\s})(1-\cos (\alpha))^2
 \nonumber \\
& & -(\ddot{\x} \circ \dot{\s} + \dot{\x} \circ \ddot{\s})\sin(\alpha) (1-\cos(\alpha)) \|
\nonumber \\
& \le & \theta \mu_k (1-\sin(\alpha))+( \| \ddot{\x} \circ \ddot{\s}\| + \| \dot{\x} \circ \dot{\s} \| )\sin^4(\alpha)
\nonumber \\
& & + ( \| \ddot{\x} \circ \dot{\s} \| + \| \dot{\x} \circ \ddot{\s} \| ) \sin^3(\alpha).
\end{eqnarray}
Clearly, if 
\begin{equation}
q(\alpha) :=
\left(\Bigl\lVert  \ddot{\x} \circ \ddot{\s} \Bigr\rVert
+\Bigl\lVert  \dot{\x} \circ \dot{\s} \Bigr\rVert \right)\sin^4(\alpha)
+\left(\Bigl\lVert  \dot{\x} \circ \ddot{\s} \Bigr\rVert
+\Bigl\lVert  \ddot{\x} \circ \dot{\s} \Bigr\rVert \right)\sin^3(\alpha)
+\theta {\mu}_k\sin(\alpha)-\theta {\mu}_k \le 0,
\label{alpha1}
\end{equation}
then, (\ref{accurate}) holds. Indeed, since $q(0) = -\theta \mu < 0$, by continuity, 
there exist $\alpha >0$ such that (\ref{alpha1}) holds. This shows that (\ref{accurate}) holds. 
From (\ref{accurate}), we have 
\[
x_i(\alpha)s_i(\alpha) \ge (1-2\theta)(1-\sin(\alpha)) \mu_{k}>0, \hspace{0.1in} \forall \theta \in [0,0.5) 
\hspace{0.1in} \mbox{and} \hspace{0.1in} \forall \alpha \in [0, \pi/2).
\]
This shows $(\x(\alpha), \s(\alpha))>0$. Therefore, we finish part (i). 
Furthermore, from Lemma 3.1, (15) is now equivalent to 
$\| \x(\alpha) \circ \s(\alpha)- \mu_{k+1} \e \| \le 2 \theta \mu_{k+1}$.
Using (\ref{poly3}), 
(\ref{newtondir1}), Lemmas \ref{rate} and \ref{mizuno}, and part (i) of this theorem, we have
\begin{eqnarray}
& & \| \x^{k+1} \circ \s^{k+1} - \mu_{k+1} \e \| \nonumber \\
& = & \| (\x(\alpha) +\Delta \x) \circ (\s(\alpha) +\Delta \s) - \mu_{k+1} \e \|   \nonumber \\
& = & \| \Delta \x \circ \Delta \s \|  \le \frac{\sqrt{2}}{4} 
\|  (\X(\alpha)\S(\alpha))^{-\frac{1}{2}}(\x(\alpha) \circ \s(\alpha)-\mu_{k+1} \e ) \|^2   \nonumber \\
& \le & \frac{\sqrt{2}}{4} \frac{\| \x(\alpha) \circ \s(\alpha)-\mu_{k+1} \e \|^2 }{\min_i x_i(\alpha)s_i(\alpha)}
   \nonumber \\
& \le &    \frac{\sqrt{2}(2\theta)^2 \mu_{k+1}^2}{4(1-2\theta ) \mu_{k+1}}    \nonumber \\
& \le &   \frac{\sqrt{2}\theta^2}{(1-2\theta )  }\mu_{k+1}.
\label{later}
\end{eqnarray}
It is easy to check that for $\theta \le \frac{1}{2+\sqrt{2}} \approx 0.29289$, 
$\frac{\sqrt{2}\theta^2}{(1-2\theta )  } \le \theta$ holds, therefore,
for $\theta \le \frac{1}{2+\sqrt{2}}$, we have
\[
\| \x^{k+1} \circ \s^{k+1} - \mu_{k+1} \e \| \le \theta \mu_{k+1}.
\]
We now show that $(\x^{k+1}, \s^{k+1})>0$. Let $\x^{k+1}(t)=\x(\alpha)+t \Delta \x$ and
$\s^{k+1}(t)=\s(\alpha)+t \Delta \s$. Then, $\x^{k+1}(0)=\x(\alpha)$ and $\x^{k+1}(1)=\x^{k+1}$.
Since
\begin{eqnarray}
& &  \x^{k+1}(t) \circ \s^{k+1}(t) = (\x(\alpha)+t \Delta \x) \circ (\s(\alpha)+t \Delta \s)
\nonumber \\
& = & \x(\alpha) \circ \s(\alpha) +t (\x(\alpha) \circ \Delta \s+\s(\alpha)  \circ \Delta \x)
+ t^2 \Delta \x \circ \Delta \s, \nonumber
\end{eqnarray}
using (\ref{newtondir1}), (\ref{accurate}), (\ref{later}), and the assumption that 
$\theta \le \frac{1}{2+\sqrt{2}}$, we have
\begin{eqnarray}
& &  \| \x^{k+1}(t) \circ \s^{k+1}(t) - {\mu}_{k+1} \e \| 
\nonumber \\
& = & \| (1-t) ( \x(\alpha) \circ \s(\alpha) - {\mu}_{k+1} \e) 
+ t^2 \Delta \x \circ \Delta \s \| \nonumber \\
& \le & 2(1-t) \theta {\mu}_{k+1} + t^2 \frac{\sqrt{2}\theta^2}{1-2\theta}{\mu}_{k+1}
\nonumber \\
& \le & (2(1-t)+t^2) \theta {\mu}_{k+1}:=f(t) \theta {\mu}_{k+1}.
\end{eqnarray}
The function $f(t)$ is a monotonical decreasing function of $t \in [0,1]$,
and $f(0) = 2$. This proves 
$\| \x^{k+1}(t) \circ \s^{k+1}(t) - {\mu}_{k+1} \e \| \le 2\theta {\mu}_{k+1}$.
Therefore, 
$x_i^{k+1}(t) s_i^{k+1}(t) \ge (1-2\theta) {\mu}_{k+1}>0$ for all $t \in [0,1]$, 
which means $(\x^{k+1} \s^{k+1})>0$. This finishes the proof of part (ii).
\hfill
\qed
\end{proof}

This theorem indicates that the proposed algorithm is well-defined.

\section{Polynomiality}

The analysis follows similar ideas in many existing literatures, such as \cite{miao96,kojima96}.
Let the initial point be selected to satisfy 
\begin{equation}
( \x^0, \s^0) \in {\cal N}(\theta),\hspace{0.1in} \x^* \le \rho \x^0,\hspace{0.1in} \s^* \le \rho \s^0,\hspace{0.1in}
(\x^*, \y^*, \s^*) \in {\cal S},
\label{initial}
\end{equation}
where $\rho \ge 1$. Let $\omega^f$ and $\omega^o$ be the
quality of the initial point which are the ``distances'' from feasibility and optimility 
given by
\begin{equation}
\omega^f = \min_{\x,\y,\s} \{ \max \{
\| (\X^0)^{-1}(\x-\x^0) \|_{\infty}, \| (\S^0)^{-1}(\s-\s^0) \|_{\infty} \} \,\,\,
| \,\,\, \A\x=\b, \,\,\, \A^{\Tr}\y +\s=\c \}.
\end{equation}
and 
\begin{equation}
\omega^0 = \min_{\x^*,\y^*,\s^*} \{ \max \{
\frac{\x^{*^{\Tr}}\s^0}{\x^{0^{\Tr}}\s^0}, \frac{\s^{*^{\Tr}}\x^0}{\x^{0^{\Tr}}\s^0}, 1 \} \,\,\,
| \,\,\, (\x^*,\y^*,\s^*) \in {\cal S} \}.
\end{equation}
Let $\omega^r_p$ and $\omega^r_d$ be the ``ratios'' of the feasibility and the 
total complementarity defined by
\begin{subequations}
\begin{align}
\omega^r_p = \frac{\| \A\x^{0}-\b \|}{\x^{0^{\Tr}}\s^0},
\\
\omega^r_d = \frac{\| \A^{\Tr} \y^0 +\s^0-\c \|} {\x^{0^{\Tr}}\s^0}.
\end{align}
\end{subequations}
In view of Lemma \ref{rate}, we have that 
\begin{subequations}
\begin{align}
\| \A\x^k - \b \| = \omega^r_p  {\x^{k^{\Tr}}\s^k},
\\
\| \A^{\Tr} \y^k +\s^k-\c \| = \omega^r_d {\x^{k^{\Tr}}\s^k}.
\end{align}
\end{subequations}
Invoking Lemma 3.3 of \cite{kojima96} for $\lambda_p=\lambda_d=\xi=1$ and (\ref{doty}), 
we have the following two Lemmas \cite{miao96}.
\begin{lemma}
Let $(\dot{\x}, \dot{\s})$ be defined by (\ref{doty}), and 
$\D^k = (\X^k)^{\frac{1}{2}}(\S^k)^{-\frac{1}{2}}$. 
Then
\begin{equation}
\max \{ \| (\D^k)^{-1} \dot{\x} \|, \| (\D^k) \dot{\s} \| \}
\le \| (\x^k \circ \s^k)^{\frac{1}{2}} \| + \omega^f (1+2\omega^o) 
\frac{(\x^k)^{\Tr} \s^k}{\min_i (x_i^k s_i^k)^{\frac{1}{2}}}.
\label{kojima}
\end{equation} 
\label{kojima1}
\end{lemma}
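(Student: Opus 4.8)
The plan is to reduce the estimate to a bound on the affine‑scaled derivatives, essentially reconstructing Lemma~3.3 of \cite{kojima96} with $\lambda_p=\lambda_d=\xi=1$. Scaling the third block row of (\ref{doty}) by $(\X^k\S^k)^{-\frac{1}{2}}$ and writing $\u:=(\D^k)^{-1}\dot{\x}$, $\v:=\D^k\dot{\s}$ turns that row into the single identity $\u+\v=(\x^k\circ\s^k)^{\frac{1}{2}}$, so it suffices to bound $\|\u\|$ and $\|\v\|$. The geometry I would exploit is that $V_x:=(\D^k)^{-1}(\ker\A)$ and $V_s:=\D^k\{\A^{\Tr}\y:\y\in\R^m\}$ are orthogonal complements in $\R^n$ (of dimensions $n-m$ and $m$), since $((\D^k)^{-1}\x)^{\Tr}(\D^k\A^{\Tr}\y)=\x^{\Tr}\A^{\Tr}\y=0$ whenever $\A\x=\0$.

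Next I would peel off a particular solution of the affine part of (\ref{doty}) built from the initial feasibility gap. By Lemma~\ref{rate}, $\r_b^k=\nu_k\r_b^0$ and $\r_c^k=\nu_k\r_c^0$ with $\nu_k:=\x^{k^{\Tr}}\s^k/\x^{0^{\Tr}}\s^0\le1$. Let $(\x',\y',\s')$ attain the minimum defining $\omega^f$, and set $\tilde{\x}=\nu_k(\x^0-\x')$, $\tilde{\s}=\nu_k(\s^0-\s')$, $\tilde{\y}=\nu_k(\y^0-\y')$, so that $\A\tilde{\x}=\r_b^k$ and $\A^{\Tr}\tilde{\y}+\tilde{\s}=\r_c^k$. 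Writing $\dot{\x}=\tilde{\x}+\hat{\x}$, $\dot{\y}=\tilde{\y}+\hat{\y}$, $\dot{\s}=\tilde{\s}+\hat{\s}$, the hatted part solves the homogeneous affine system, whence $\hat{\x}^{\Tr}\hat{\s}=0$, i.e.\ $(\D^k)^{-1}\hat{\x}\in V_x$ and $\D^k\hat{\s}\in V_s$; since their sum equals $z:=(\x^k\circ\s^k)^{\frac{1}{2}}-(\D^k)^{-1}\tilde{\x}-\D^k\tilde{\s}$, uniqueness of the orthogonal decomposition gives $(\D^k)^{-1}\hat{\x}=P_{V_x}z$. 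Hence $(\D^k)^{-1}\dot{\x}=P_{V_s}\big((\D^k)^{-1}\tilde{\x}\big)+P_{V_x}\big((\x^k\circ\s^k)^{\frac{1}{2}}\big)-P_{V_x}\big(\D^k\tilde{\s}\big)$, and by contractivity of orthogonal projections $\|(\D^k)^{-1}\dot{\x}\|\le\|(\x^k\circ\s^k)^{\frac{1}{2}}\|+\|(\D^k)^{-1}\tilde{\x}\|+\|\D^k\tilde{\s}\|$, with the same bound for $\|\D^k\dot{\s}\|$ by symmetry. (The naive triangle inequality $\|(\D^k)^{-1}\dot{\x}\|\le\|(\D^k)^{-1}\hat{\x}\|+\|(\D^k)^{-1}\tilde{\x}\|$ would lose a factor $2$ here, so this projection identity is essential.)

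It remains to estimate $\|(\D^k)^{-1}\tilde{\x}\|$ and $\|\D^k\tilde{\s}\|$. Componentwise $|\tilde{x}_i|\le\nu_k\omega^f x_i^0$, so $\big|((\D^k)^{-1}\tilde{\x})_i\big|=(s_i^k/x_i^k)^{\frac{1}{2}}|\tilde{x}_i|\le\nu_k\omega^f\,x_i^0 s_i^k/\min_j(x_j^k s_j^k)^{\frac{1}{2}}$; squaring, summing, and using $\sum_i a_i^2\le(\sum_i a_i)^2$ for $a_i\ge0$ gives $\|(\D^k)^{-1}\tilde{\x}\|\le\nu_k\omega^f\,(\x^0)^{\Tr}\s^k/\min_j(x_j^k s_j^k)^{\frac{1}{2}}$, and likewise $\|\D^k\tilde{\s}\|\le\nu_k\omega^f\,(\s^0)^{\Tr}\x^k/\min_j(x_j^k s_j^k)^{\frac{1}{2}}$. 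Finally I would show $\nu_k\big((\x^0)^{\Tr}\s^k+(\s^0)^{\Tr}\x^k\big)\le(1+2\omega^o)\x^{k^{\Tr}}\s^k$: taking $(\x^*,\y^*,\s^*)$ that attains $\omega^o$, Lemma~\ref{rate} shows $\x^k-\nu_k\x^0-(1-\nu_k)\x^*\in\ker\A$ while $\s^k-\nu_k\s^0-(1-\nu_k)\s^*$ lies in the range of $\A^{\Tr}$, so they are orthogonal; expanding this inner product, discarding the nonnegative term $(1-\nu_k)(\x^{k^{\Tr}}\s^*+\x^{*^{\Tr}}\s^k)$, and using $\x^{*^{\Tr}}\s^*=0$, the identity $\nu_k\x^{0^{\Tr}}\s^0=\x^{k^{\Tr}}\s^k$, the bounds $\x^{0^{\Tr}}\s^*,\,\x^{*^{\Tr}}\s^0\le\omega^o\x^{0^{\Tr}}\s^0$, and $\omega^o\ge1$ yields the coefficient $1+\nu_k+(1-\nu_k)2\omega^o$, which is $\le1+2\omega^o$ because it is decreasing in $\nu_k$. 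Substituting this into the second step gives the claim.

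The step I expect to be the main obstacle is this last bookkeeping on the constant. Two pitfalls must be avoided: one has to use the projection identity rather than the crude triangle inequality, so that only a single copy of $\|(\D^k)^{-1}\tilde{\x}\|+\|\D^k\tilde{\s}\|$ appears; and in the orthogonality computation with $(\x^*,\y^*,\s^*)$ one must keep the factor $(1-\nu_k)$ attached to the $2\omega^o$‑terms (invoking $\omega^o\ge1$) instead of bounding $1-\nu_k\le1$ prematurely, which would degrade the constant to $2+2\omega^o$. A minor preliminary point is that the minimum defining $\omega^f$ is attained, which holds after restricting $\y$ to a complement of $\ker\A^{\Tr}$, so that $\tilde{\x},\tilde{\s}$ above are well defined.
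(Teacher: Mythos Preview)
The paper does not give its own proof of this lemma; it simply states that the result follows by invoking Lemma~3.3 of \cite{kojima96} with $\lambda_p=\lambda_d=\xi=1$ together with (\ref{doty}), and cites \cite{miao96}. Your proposal is a correct, self-contained reconstruction of exactly that argument---the orthogonal decomposition via $V_x\oplus V_s$, the particular solution $(\tilde{\x},\tilde{\y},\tilde{\s})$ built from the $\omega^f$-minimizer, the projection identity that avoids the spurious factor~$2$, and the bookkeeping with $(\x^*,\y^*,\s^*)$ and $\omega^o\ge1$ to extract the constant $1+2\omega^o$---so your approach coincides with the one the paper delegates to the references.
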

\begin{lemma}
Let $({\x}^0, {\s}^0)$ be defined by (\ref{initial}). Then
\begin{equation}
\omega^f \le \rho, \hspace{0.1in} \omega^o \le \rho.
\label{kojima2f}
\end{equation} 
\label{kojima2}
\end{lemma}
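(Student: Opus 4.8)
The plan is to prove the two bounds $\omega^f \le \rho$ and $\omega^o \le \rho$ separately, in each case by exhibiting a single admissible point in the minimand that already achieves the bound; since $\omega^f$ and $\omega^o$ are defined as infima, producing one good candidate suffices.

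For the bound $\omega^o \le \rho$, recall that by hypothesis \eqref{initial} there is an optimal triple $(\x^*,\y^*,\s^*) \in {\cal S}$ with $\x^* \le \rho \x^0$ and $\s^* \le \rho \s^0$ (componentwise). Feeding this particular $(\x^*,\y^*,\s^*)$ into the definition of $\omega^o$, I would bound each of the two ratios: since $\x^* \le \rho\x^0$ and $\s^0 > 0$, we get $\x^{*\Tr}\s^0 \le \rho\,\x^{0\Tr}\s^0$, hence $\x^{*\Tr}\s^0/(\x^{0\Tr}\s^0) \le \rho$, and symmetrically $\s^{*\Tr}\x^0/(\x^{0\Tr}\s^0) \le \rho$; the third entry in the $\max$ is $1 \le \rho$. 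Therefore the $\max$ is at most $\rho$ for this choice, and the infimum over all optimal triples is no larger, giving $\omega^o \le \rho$.

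For the bound $\omega^f \le \rho$, I would use the same idea with the feasibility minimand: again take the specific optimal point $(\x^*,\y^*,\s^*)$ from \eqref{initial}. It satisfies $\A\x^* = \b$ and $\A^{\Tr}\y^* + \s^* = \c$, so it is admissible in the definition of $\omega^f$. It then remains to bound $\|(\X^0)^{-1}(\x^*-\x^0)\|_\infty$ and $\|(\S^0)^{-1}(\s^*-\s^0)\|_\infty$. Componentwise, $(x_i^* - x_i^0)/x_i^0 = x_i^*/x_i^0 - 1$; since $0 \le x_i^* \le \rho x_i^0$, this quantity lies in $[-1, \rho-1]$, so its absolute value is at most $\max\{1,\rho-1\} \le \rho$ (using $\rho \ge 1$). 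The same estimate applies to the $\s$-component. Hence the $\max$ of the two infinity norms is at most $\rho$ for this candidate, and the infimum defining $\omega^f$ is at most $\rho$.

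There is no real obstacle here; the only point requiring a moment's care is the sign bookkeeping in the $\omega^f$ estimate — the ratio $x_i^*/x_i^0 - 1$ can be negative, so one must check the lower end $-1$ as well as the upper end $\rho - 1$, and observe that $\rho \ge 1$ makes $\max\{1,\rho-1\} \le \rho$. Everything else is immediate from the hypotheses in \eqref{initial} and positivity of $(\x^0,\s^0)$.
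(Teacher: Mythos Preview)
Your argument is correct. The paper itself does not give a proof of this lemma; it simply cites it from \cite{miao96} (together with Lemma \ref{kojima1}) as a known fact. Your direct verification --- plugging the particular optimal triple from \eqref{initial} into each minimand and bounding the resulting $\max$ --- is exactly the standard way to establish these inequalities, and your handling of the sign issue in the $\omega^f$ bound (checking both ends of the interval $[-1,\rho-1]$ and using $\rho\ge 1$) is the only nontrivial step.
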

This leads to the following lemma.
\begin{lemma}
Let $(\dot{\x}, \dot{\s})$ be defined by (\ref{doty}). Then, there exists
a positive constant $C_0$, independent of $n$, such that
\begin{equation}
\max \{ \| (\D^k)^{-1} \dot{\x} \|, \| (\D^k) \dot{\s} \| \}
\le C_0 \sqrt{n (\x^k)^{\Tr} \s^k}.
\label{estimateXS}
\end{equation}
\label{estimateDotXS}
\end{lemma}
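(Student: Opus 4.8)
The plan is to read off the estimate directly from Lemma~\ref{kojima1}, bounding its right-hand side term by term with the help of the neighborhood membership $(\x^k,\s^k)\in{\cal N}(\theta)$ (guaranteed for every iterate by Theorem~\ref{neighbor}(ii)) and the crude bounds $\omega^f\le\rho$, $\omega^o\le\rho$ of Lemma~\ref{kojima2}.

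For the first summand, I would observe that $\|(\x^k\circ\s^k)^{\frac12}\|^2=\sum_{i=1}^n x_i^k s_i^k=(\x^k)^{\Tr}\s^k$, so $\|(\x^k\circ\s^k)^{\frac12}\|=\sqrt{(\x^k)^{\Tr}\s^k}\le\sqrt{n(\x^k)^{\Tr}\s^k}$. For the second summand, the key point is to lower-bound $\min_i x_i^k s_i^k$. Since $(\x^k,\s^k)\in{\cal N}(\theta)$ we have $\|\x^k\circ\s^k-\mu_k\e\|\le\theta\mu_k$, and passing to components gives $|x_i^k s_i^k-\mu_k|\le\theta\mu_k$, hence $x_i^k s_i^k\ge(1-\theta)\mu_k$ for all $i$. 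Using $\mu_k=(\x^k)^{\Tr}\s^k/n$ this yields $\min_i (x_i^k s_i^k)^{\frac12}\ge\sqrt{(1-\theta)(\x^k)^{\Tr}\s^k/n}$, so that
\[
\frac{(\x^k)^{\Tr}\s^k}{\min_i (x_i^k s_i^k)^{\frac12}}\le\sqrt{\frac{n}{1-\theta}}\,\sqrt{(\x^k)^{\Tr}\s^k}.
\]
Together with $\omega^f(1+2\omega^o)\le\rho(1+2\rho)$ from Lemma~\ref{kojima2}, substituting both bounds into Lemma~\ref{kojima1} gives
\[
\max\{\|(\D^k)^{-1}\dot{\x}\|,\|\D^k\dot{\s}\|\}\le\Bigl(1+\frac{\rho(1+2\rho)}{\sqrt{1-\theta}}\Bigr)\sqrt{n(\x^k)^{\Tr}\s^k},
\]
so one can take $C_0=1+\rho(1+2\rho)/\sqrt{1-\theta}$, which depends only on $\rho$ and $\theta$, not on $n$.

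The argument is essentially routine; the only thing that needs care is that it relies on $(\x^k,\s^k)\in{\cal N}(\theta)$ in order to bound $\min_i x_i^k s_i^k$ from below, so the proof must invoke Theorem~\ref{neighbor}(ii) (and use the standing hypothesis $\theta\le\frac{1}{2+\sqrt{2}}<\frac12$, which makes $1-\theta>0$). Beyond that there is no real obstacle — everything else is elementary manipulation of Euclidean norms.
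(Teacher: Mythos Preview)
Your argument is correct and essentially identical to the paper's own proof: the paper bounds $\|(\x^k\circ\s^k)^{1/2}\|=\sqrt{(\x^k)^{\Tr}\s^k}$, uses $(\x^k,\s^k)\in{\cal N}(\theta)$ to get $\min_i x_i^ks_i^k\ge(1-\theta)\mu_k$ and hence the same bound on the second summand, and then substitutes into Lemma~\ref{kojima1} together with Lemma~\ref{kojima2} to arrive at exactly the constant $C_0=1+\rho(1+2\rho)/\sqrt{1-\theta}$ you obtained. Your explicit mention of Theorem~\ref{neighbor}(ii) to justify neighborhood membership at every iterate is a nice point of care that the paper leaves implicit.
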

\begin{proof}
First, it is easy to see 
\begin{equation}
\| (\x^k \circ \s^k)^{\frac{1}{2}} \| = 
\sqrt{\sum_i x_i^k s_i^k} = \sqrt{(\x^k)^{\Tr} \s^k}. 
\label{est1}
\end{equation}
Since $(\x^k, \s^k) \in {\cal N}(\theta)$, we have 
$\min_i (x_i^k s_i^k) \ge (1-\theta) \mu_k = (1-\theta) \frac{ (\x^k)^{\Tr} \s^k}{n}$.
Therefore,
\begin{equation}
\frac{(\x^k)^{\Tr} \s^k}{\min_i (x_i^k s_i^k)^{\frac{1}{2}}}
\le \sqrt{ \frac{n(\x^k)^{\Tr} \s^k}{(1-\theta)}}.
\label{est2}
\end{equation}
Substituting (\ref{est1}) and (\ref{est2}) into (\ref{kojima}) and using Lemma \ref{kojima2} prove (\ref{estimateXS}) with 
$C_0=1+\frac{\rho (1+2\rho)}{\sqrt{(1-\theta)}} \ge 1+\frac{\omega^f (1+2\omega^o)}{\sqrt{(1-\theta)}}$.
\hfill \qed
\end{proof}

From Lemma \ref{estimateDotXS}, we can establish several useful inequalities. The following simple 
facts will be used several times. Let $\u$ and $\v$ be two vectors, then
\begin{equation}
\| \u \circ \v \|^2 = \sum_i (u_i v_i)^2 \le \left(\sum_i u_i^2 \right) \left( \sum_i v_i^2 \right) = \| \u \|^2 \| \v \|^2.
\label{fact1}
\end{equation}
If $\u$ and $\v$ satisfy $\u^{\Tr} \v =0$, then,
\begin{equation}
\max \{ \| \u \|^2, \| \v \|^2 \} \le \| \u \|^2+\| \v \|^2 = \| \u + \v \|^2,
\label{fact2}
\end{equation}
and (see \cite[Lemma 5.3]{wright97})
\begin{equation}
\| \u \circ \v \| \le 2^{-\frac{3}{2}} \| \u + \v \|^2.
\label{fact3}
\end{equation}

\begin{lemma}
Let $(\dot{\x}, \dot{\s})$ and $(\ddot{\x}, \ddot{\s})$ be defined by (\ref{doty}) and  (\ref{ddoty}), respectively.  
Then, there exist positive constants $C_1$, $C_2$, $C_3$, and $C_4$, independent of $n$, such that
\begin{eqnarray}
\| \dot{\x} \circ \dot{\s} \| \le C_1 n^2 \mu_k, \label{est3} \\
\| \ddot{\x} \circ \ddot{\s} \| \le C_2 n^4 \mu_k, \label{est4}  \\
\max \{ \| (\D^k)^{-1} \ddot{\x} \|, \| (\D^k) \ddot{\s} \|  \} \le C_3 n^{2} \sqrt{\mu_k}, \label{est5}  \\
\max \{ \| \ddot{\x} \circ \dot{\s} \|, \| \dot{\x} \circ \ddot{\s} \| \} \le C_4n^{3}\mu_k \label{est6} 
\end{eqnarray}
\label{norms}
\end{lemma}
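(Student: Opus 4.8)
The plan is to bound the first derivatives $(\dot{\x},\dot{\s})$ and the second derivatives $(\ddot{\x},\ddot{\s})$ in the scaled norm induced by $\D^k$, then convert these scaled bounds into bounds on the Hadamard products appearing in \eqref{est3}--\eqref{est6}. The starting point for the first derivatives is Lemma \ref{estimateDotXS}, which already gives $\max\{\|(\D^k)^{-1}\dot{\x}\|,\|\D^k\dot{\s}\|\}\le C_0\sqrt{n(\x^k)^{\Tr}\s^k}=C_0 n\sqrt{\mu_k}$. For \eqref{est3} I would write $\dot{\x}\circ\dot{\s}=\big((\D^k)^{-1}\dot{\x}\big)\circ\big(\D^k\dot{\s}\big)$, since the diagonal scalings cancel in the Hadamard product, and then apply \eqref{fact1} to get $\|\dot{\x}\circ\dot{\s}\|\le\|(\D^k)^{-1}\dot{\x}\|\,\|\D^k\dot{\s}\|\le C_0^2 n^2\mu_k$, so $C_1=C_0^2$ works.

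Next I would handle the second derivatives. From \eqref{ddoty}, the scaled second-derivative vector $\big((\D^k)^{-1}\ddot{\x},\D^k\ddot{\s}\big)$ solves a system whose right-hand side is essentially $(\X^k\S^k)^{-1/2}(-2\dot{\x}\circ\dot{\s})$ (after the standard symmetrization of the Newton-type system), and whose coefficient matrix has the same structure as in \eqref{doty}. The key algebraic point — the analogue of Lemma \ref{kojima1} but for \eqref{ddoty}, whose right-hand side has zero primal and dual residual blocks — is that the projection-type bound gives $\max\{\|(\D^k)^{-1}\ddot{\x}\|,\|\D^k\ddot{\s}\|\}\le \|(\X^k\S^k)^{-1/2}\|_{\mathrm{op}}\cdot 2\|\dot{\x}\circ\dot{\s}\|$, or more precisely is controlled by $2\|\dot{\x}\circ\dot{\s}\|/\min_i(x_i^k s_i^k)^{1/2}$. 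Using $\min_i x_i^k s_i^k\ge(1-\theta)\mu_k$ and \eqref{est3}, this yields $\max\{\|(\D^k)^{-1}\ddot{\x}\|,\|\D^k\ddot{\s}\|\}\le 2C_1 n^2\mu_k/\sqrt{(1-\theta)\mu_k}=\frac{2C_1}{\sqrt{1-\theta}}\,n^2\sqrt{\mu_k}$, giving \eqref{est5} with $C_3=2C_1/\sqrt{1-\theta}$.

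Finally, \eqref{est4} and \eqref{est6} follow by the same cancellation trick as for \eqref{est3}. For \eqref{est4}, $\ddot{\x}\circ\ddot{\s}=\big((\D^k)^{-1}\ddot{\x}\big)\circ\big(\D^k\ddot{\s}\big)$, so by \eqref{fact1} and \eqref{est5}, $\|\ddot{\x}\circ\ddot{\s}\|\le C_3^2 n^4\mu_k$, i.e.\ $C_2=C_3^2$. For \eqref{est6}, $\dot{\x}\circ\ddot{\s}=\big((\D^k)^{-1}\dot{\x}\big)\circ\big(\D^k\ddot{\s}\big)$ and symmetrically for $\ddot{\x}\circ\dot{\s}$, so \eqref{fact1} with Lemma \ref{estimateDotXS} and \eqref{est5} gives $\max\{\|\ddot{\x}\circ\dot{\s}\|,\|\dot{\x}\circ\ddot{\s}\|\}\le (C_0 n\sqrt{\mu_k})(C_3 n^2\sqrt{\mu_k})=C_0C_3\,n^3\mu_k$, i.e.\ $C_4=C_0C_3$. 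All four constants depend only on $C_0$ and $\theta$, hence are independent of $n$.

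The main obstacle is establishing the bound on the scaled second derivative, \eqref{est5}: unlike \eqref{est3}, it is not a one-line Cauchy--Schwarz argument but requires re-deriving the projection estimate of Lemma 3.3 of \cite{kojima96} (equivalently Lemma \ref{kojima1}) for the second-derivative system \eqref{ddoty}. The favorable feature is that \eqref{ddoty} has vanishing residual terms in its first two block rows, so the $\omega^f,\omega^o$ terms drop out and only the complementarity-type term remains; the solution of \eqref{ddoty}, after scaling, decomposes into a component in the null space of $\A\D^k$ and one in its row space, and the norm of the whole scaled vector is bounded by the norm of the (scaled) right-hand side $2(\X^k\S^k)^{-1/2}(\dot{\x}\circ\dot{\s})$, which is exactly what feeds the computation above. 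Care must be taken that the constant picks up the factor $1/\sqrt{1-\theta}$ from the neighborhood bound on $\min_i x_i^k s_i^k$ but nothing that grows with $n$ beyond the explicit powers shown.
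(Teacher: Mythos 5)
Your proposal is correct and follows essentially the same route as the paper: scaling cancellation in the Hadamard products via \eqref{fact1}, orthogonality of $(\D^k)^{-1}\ddot{\x}$ and $\D^k\ddot{\s}$ (from the homogeneous first two block rows of \eqref{ddoty}) to bound the scaled second derivative by the scaled right-hand side $2(\X^k\S^k)^{-1/2}(\dot{\x}\circ\dot{\s})$, and the neighborhood bound $\min_i x_i^k s_i^k\ge(1-\theta)\mu_k$. The only (immaterial) difference is that for \eqref{est4} the paper applies \eqref{fact3} directly to get $C_2=\sqrt{2}\,C_1^2/(1-\theta)$, whereas you chain Cauchy--Schwarz through \eqref{est5} to get $C_2=C_3^2=4C_1^2/(1-\theta)$; both yield the same order in $n$ and $\mu_k$.
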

\begin{proof}
First, using (\ref{fact1}) and Lemma \ref{estimateDotXS}, we have
\begin{eqnarray}
\| \dot{\x} \circ \dot{\s} \| = \| (\D^k)^{-1} \dot{\x} \circ (\D^k) \dot{\s}  \| \le 
\| (\D^k)^{-1} \dot{\x} \| \| (\D^k) \dot{\s}  \| \le C_0^2 {n (\x^k)^{\Tr} \s^k} := C_1 n^2 \mu_k.
\end{eqnarray}
Second, using (\ref{fact3}), (\ref{ddoty}), (\ref{est3}), and (\ref{est1}), we have 
\begin{eqnarray}
& & \| \ddot{\x} \circ \ddot{\s} \| = \| (\D^k)^{-1} \ddot{\x} \circ (\D^k) \ddot{\s}  \| \le 
2^{-\frac{3}{2}} \| (\D^k)^{-1} \ddot{\x} + (\D^k) \ddot{\s} \|^2 
\nonumber \\
& \le & 2^{-\frac{3}{2}} \Bigl\lVert {-2 (\X\S)^{-\frac{1}{2}}(\dot{\x} \circ \dot{\s}) } \Bigr\rVert^2  
\nonumber \\
&=& 
2^{\frac{1}{2}}  \sum_{i=1}^n \left( \frac{\dot{x}_i \dot{s}_i}{\sqrt{x_i}\sqrt{s_i}} \right)^2 
= 2^{\frac{1}{2}}  \sum_{i=1}^n \frac{(\dot{x}_i \dot{s}_i)^2}{x_is_i}  
\nonumber \\
&\le&  2^{\frac{1}{2}}  \frac{\sum_{i=1}^n (\dot{x}_i \dot{s}_i)^2}{\min_{i=1,\ldots,n}x_is_i}  \nonumber \\
&\le& 2^{\frac{1}{2}}  \frac{||\dot{x}\circ\dot{s}||^2}{(1-\theta)\mu_k} 
\le 2^{\frac{1}{2}} \frac{C_1^2 n^4 \mu_k^2}{(1-\theta)\mu_k} 
\nonumber \\
& = & 2^{\frac{1}{2}}  \frac{C_1^2 n^4 \mu_k}{1-\theta}
:=C_2n^4 \mu_k.
\end{eqnarray}
Third, using (\ref{fact2}), (\ref{ddoty}), and (\ref{est3}), we have 
\begin{eqnarray}
& & \max \{ \| (\D^k)^{-1} \ddot{\x} \|^2, \| (\D^k) \ddot{\s} \|^2 \}
\le \| (\D^k)^{-1} \ddot{\x} + (\D^k) \ddot{\s} \|^2 
\nonumber \\
& = & \Bigl\lVert {-2 (\X\S)^{-\frac{1}{2}}(\dot{\x} \circ \dot{\s}) } \Bigr\rVert^2
\le \frac{4 C_1^2 n^4 \mu_k}{1-\theta} :=C_3^2 n^4 \mu_k.
\end{eqnarray}
Taking square root on both sides proves (\ref{est5}). Finally, using (\ref{fact1}), 
(\ref{est5}), and Lemma
\ref{estimateDotXS}, we have
\begin{eqnarray}
& & \| \ddot{\x} \circ \dot{\s} \|=\| (\D^k)^{-1} \ddot{\x} \circ (\D^k) \dot{\s} \| 
\le \| (\D^k)^{-1} \ddot{\x} \| \| (\D^k) \dot{\s} \| 
\nonumber \\
& \le &  (C_3 n^{2} \sqrt{\mu_k})(C_0 n \sqrt{\mu_k}):= C_4n^{3}\mu_k.
\end{eqnarray}
Similarly, we can show
\begin{eqnarray}
\| \dot{\x} \circ \ddot{\s} \| \le C_4n^{3}\mu_k.
\end{eqnarray}
This finishes the proof.
\hfill \qed
\end{proof}

Now we are ready to estimate a conservative bound for $\sin(\alpha)$. 
\begin{lemma}
Let $(\x^k, \y^k, \s^k)$ be generated by Algorithm \ref{mainAlgo3}. Then, $\sin(\alpha)$
obtained in Step 3 satisfies the following inequality.
\begin{equation}
\sin(\alpha) \ge \frac{\theta}{2Cn},
\end{equation}
where $C=\max \{ 1, C_4^{\frac{1}{3}}, (C_1+C_2)^{\frac{1}{4}} \}$.
\label{lastLemma}
\end{lemma}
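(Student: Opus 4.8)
The plan is to reuse the quartic $q(\alpha)$ from the proof of Theorem~\ref{neighbor}. Recall that $q(\alpha)\le 0$ already implies both (\ref{accurate}) and $(\x(\alpha),\s(\alpha))>0$, and that, with $t=\sin(\alpha)$, $q$ has the form $At^4+Bt^3+\theta\mu_k t-\theta\mu_k$ with $A=\|\ddot{\x}\circ\ddot{\s}\|+\|\dot{\x}\circ\dot{\s}\|\ge 0$ and $B=\|\dot{\x}\circ\ddot{\s}\|+\|\ddot{\x}\circ\dot{\s}\|\ge 0$, so $q$ is strictly increasing in $t$ on $[0,1]$. Hence it suffices to exhibit one value $t_0\in(0,1]$ with $q\le 0$ there: monotonicity then gives $q(\alpha)\le 0$ for every $\alpha$ with $\sin(\alpha)\le t_0$, so every such $\alpha$ is admissible in Step~3 and the $\bar\alpha$ selected there satisfies $\sin(\bar\alpha)\ge t_0$. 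I will take $t_0=\frac{\theta}{2Cn}$.

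First I would replace $A$ and $B$ by the bounds of Lemma~\ref{norms}: using $n\ge 1$, $A\le(C_1n^2+C_2n^4)\mu_k\le(C_1+C_2)n^4\mu_k$ and $B\le 2C_4n^3\mu_k$. Substituting these, then $\sin(\alpha)=t_0=\frac{\theta}{2Cn}$, and dividing by $\mu_k$, the choice $C=\max\{1,C_4^{1/3},(C_1+C_2)^{1/4}\}$ is exactly what is needed to collapse the three positive terms:
\[
\frac{(C_1+C_2)\theta^4}{16C^4}\le\frac{\theta^4}{16},\qquad
\frac{C_4\theta^3}{4C^3}\le\frac{\theta^3}{4},\qquad
\frac{\theta^2}{2Cn}\le\frac{\theta^2}{2},
\]
the last one because $Cn\ge 1$. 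Therefore $q(\alpha)/\mu_k\le\theta\!\left(\frac{\theta^3}{16}+\frac{\theta^2}{4}+\frac{\theta}{2}-1\right)$, and since $\theta\le\frac{1}{2+\sqrt{2}}<1$ the parenthesis is at most $\frac{13}{16}-1<0$. Thus $q(\alpha)<0$ at $\sin(\alpha)=\frac{\theta}{2Cn}$, which completes the argument.

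The one place to be careful — more bookkeeping than genuine difficulty — is the reduction in the first paragraph: it hinges on (i) the strict monotonicity of $q$ as a function of $t=\sin(\alpha)$ on $[0,\pi/2]$ (clear since $A,B\ge 0$ and the derivative $4At^3+3Bt^2+\theta\mu_k$ is positive for $t>0$), and (ii) the chain $q(\alpha)\le 0\Rightarrow(\ref{accurate})\Rightarrow(\x(\alpha),\s(\alpha))>0$ established inside the proof of Theorem~\ref{neighbor}. Given these, everything else is the substitution above together with the elementary numerical check on $\theta$; no estimate beyond Lemma~\ref{norms} is required.
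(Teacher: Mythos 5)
Your proposal is correct and follows essentially the same route as the paper: substitute the bounds of Lemma~\ref{norms} into $q(\alpha)$, evaluate at $\sin(\alpha)=\frac{\theta}{2Cn}$, use the definition of $C$ to reduce to $\frac{\theta^4}{16}+\frac{\theta^3}{4}+\frac{\theta^2}{2}-\theta\le 0$, and invoke monotonicity of the quartic in $\sin(\alpha)$ to conclude that every smaller step is admissible, hence the step chosen in Step~3 is at least $\frac{\theta}{2Cn}$. You in fact make explicit two points the paper leaves terse (the positivity of $q'$ and why admissibility of all $t\le t_0$ forces $\sin(\bar\alpha)\ge t_0$), which is a welcome clarification but not a different argument.
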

\begin{proof}
Let $\sin(\alpha) = \frac{\theta}{2Cn}$. In view of (\ref{alpha1}) and Lemma \ref{norms}, we have
\begin{eqnarray}
q(\alpha)  & \le & \mu_k ((C_1+C_2) n^4 \sin^4(\alpha) + 2C_4n^{3} \sin^3(\alpha) +\theta \sin(\alpha)
-\theta) := \mu_k  p(\alpha)
\nonumber \\
& \le & \mu_k \left(
\frac{(C_1+C_2)\theta^4}{16C^4}+\frac{2C_4 \theta^3}{8C^3}+\frac{\theta^2}{2Cn}-\theta
\right)
\nonumber \\
& \le & \mu_k \left(
\frac{\theta^4}{16}+\frac{ \theta^3}{4}+\frac{\theta^2}{2}-\theta
\right)
\le 0. \nonumber 
\end{eqnarray}
Since $p(\alpha)$ is a monotonic function of $\sin(\alpha)$, 
for all $\sin(\alpha) \le \frac{\theta}{2Cn}$,
the above inequalities hold (the last inequality holds because of $\theta \le 1$). 
Therefore, for all $\sin(\alpha) \le \frac{\theta}{2Cn}$, the
inequality (\ref{accurate}) holds. This finishes the proof.
\hfill \qed
\end{proof}

\begin{remark}
It is worthwhile to point out that the constant $C$ depends on $C_0$ which depends on $\rho$,
but $\rho$ is an unknown before we find the solution. Also, we can always find a better steplength 
$\sin(\alpha)$ by solving the quartic $q(\alpha)=0$ and the calculation of the roots for 
a quartic polynomial is deterministic, negligible, and independent
to $n$ \cite{shmakov11,yang13a}.
\end{remark}

Following the standard argument developed in \cite{wright97}, we have
the main theorem.
\begin{theorem}
Let $(\x^k, \y^k, \s^k)$ be generated by Algorithm \ref{mainAlgo3} with an initial point given
by (\ref{initial}). For any $\epsilon >0$, the algorithm will terminate with $(\x^k, \y^k, \s^k)$
satisfying (\ref{errorNorms}) in at most ${\cal O}(nL)$ iterations, where
\[
L=\max \{ \ln((\x^0)^{\Tr} \s^0/\epsilon), \ln(\| \r_b^0\|/\epsilon), \ln(\| \r_c^0\|/\epsilon) \}.
\]
\end{theorem}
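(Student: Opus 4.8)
The plan is to combine the linear convergence rate from Lemma \ref{rate} with the lower bound on the steplength from Lemma \ref{lastLemma} in the standard way. First I would note that by Lemma \ref{rate}, all three quantities $\mu_k$, $\|\r_b^k\|$, and $\|\r_c^k\|$ are reduced by the common factor $(1-\sin(\alpha_k))$ at iteration $k$, so that
\[
\mu_k = \mu_0 \prod_{j=0}^{k-1}(1-\sin(\alpha_j)), \qquad
\|\r_b^k\| = \|\r_b^0\| \prod_{j=0}^{k-1}(1-\sin(\alpha_j)), \qquad
\|\r_c^k\| = \|\r_c^0\| \prod_{j=0}^{k-1}(1-\sin(\alpha_j)).
\]
Here I should be slightly careful: $\mu_0 = (\x^0)^{\Tr}\s^0/n$, so $n\mu_0 = (\x^0)^{\Tr}\s^0$; I will track the duality measure via $\mu_k$ and note that $\mu_k \le \epsilon$ is implied once $n\mu_k \le n\epsilon$, or simply absorb the harmless factor of $n$ into the logarithm and the $\mathcal{O}(\cdot)$.

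Next I would invoke Lemma \ref{lastLemma}, which guarantees $\sin(\alpha_j) \ge \frac{\theta}{2Cn}$ for every $j$, with $C$ a constant independent of $n$. Therefore $1-\sin(\alpha_j) \le 1 - \frac{\theta}{2Cn}$, and taking logarithms,
\[
\ln \mu_k \le \ln \mu_0 + k \ln\!\left(1-\frac{\theta}{2Cn}\right) \le \ln \mu_0 - k\,\frac{\theta}{2Cn},
\]
using the elementary inequality $\ln(1-t) \le -t$ for $t \in [0,1)$. The same bound holds verbatim with $\mu_0,\mu_k$ replaced by $\|\r_b^0\|,\|\r_b^k\|$ and by $\|\r_c^0\|,\|\r_c^k\|$. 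Hence each of the three stopping conditions in (\ref{errorNorms}) is satisfied as soon as $k\,\frac{\theta}{2Cn} \ge \ln(\mu_0/\epsilon)$ (respectively the analogous ratios), i.e.\ as soon as $k \ge \frac{2Cn}{\theta}\max\{\ln(\mu_0/\epsilon), \ln(\|\r_b^0\|/\epsilon), \ln(\|\r_c^0\|/\epsilon)\}$. Recognizing the max as (a constant multiple of, after the $n$-factor remark) the quantity $L$ defined in the statement, and recalling that $\theta$ and $C$ are fixed constants independent of $n$, this gives termination in at most $\mathcal{O}(nL)$ iterations. The positivity requirement (\ref{11d}) is not an issue, since part (ii) of Theorem \ref{neighbor} keeps every iterate in $\mathcal{N}(\theta)$, which forces $(\x^k,\s^k)>0$ throughout.

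I do not anticipate a serious obstacle here; the heavy lifting has already been done in Lemmas \ref{rate}, \ref{lastLemma} and Theorem \ref{neighbor}. The one point that needs a little care is the bookkeeping between $(\x^0)^{\Tr}\s^0 = n\mu_0$ and $\mu_0$ itself, and making sure the definition of $L$ in the statement (which uses $(\x^0)^{\Tr}\s^0$) is consistent with what the convergence estimate naturally produces (which uses $\mu_0$); the discrepancy is only a $\ln n$ term, which is dominated by $nL$ and so does not affect the $\mathcal{O}(nL)$ conclusion. The remark following Lemma \ref{lastLemma} about $C$ depending on the unknown $\rho$ does not affect the asymptotic iteration count, since $C$ is still independent of $n$; I would mention this only in passing.
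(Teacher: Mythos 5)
Your proof is correct and follows essentially the same route as the paper: the paper simply cites Lemma \ref{rate}, Lemma \ref{lastLemma}, and Theorem 3.2 of \cite{wright97}, and your write-up is precisely the content of that cited theorem (geometric decrease at rate $1-\frac{\theta}{2Cn}$ plus $\ln(1-t)\le -t$), with the $n\mu_0$ versus $(\x^0)^{\Tr}\s^0$ bookkeeping and the positivity condition handled correctly.
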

\begin{proof}
In view of Lemma \ref{rate}, $\r_b^k$, $\r_c^k$, and $\mu_k$ decrease at the same rate
$(1-\sin(\alpha))$ in every iteration. Using the Lemma \ref{lastLemma} and \cite[Theorem 3.2]{wright97}
proves the claim.
\hfill \qed
\end{proof}


\section{Conclusions}
We proposed an infeasible-interior-point algorithm that searches the optimizer along an
ellipse that approximates the central-path. We showed that the proposed algorithm is
polynomial and that the polynomial bound is at least as good as the best existing bound for
infeasible-interior-point algorithms for linear programming.


\end{document}